\definecolor{grey}{rgb}{.7,.7,.7}
\definecolor{blue}{rgb}{0,0,.8}
\definecolor{red}{rgb}{.8,0,0}
\definecolor{orange}{rgb}{1,.3,0}
\definecolor{green}{rgb}{0,.4,0}
\definecolor{gold}{rgb}{0.8,0.6,0.1}
\definecolor{brown}{rgb}{0.8,0.4,0.1}
\definecolor{blue3}{rgb}{.1,.0,.4}
\theoremstyle{plain}
\newtheorem{thm}{Theorem}
\newtheorem{lemma}[thm]{Lemma}
\newtheorem{prop}[thm]{Proposition}
\newtheorem{quest}[thm]{Question}
\newtheorem*{quest*}{Question}
\newtheorem*{cor*}{Corollary}
\newtheorem{obs}[thm]{Observation}
\newtheorem{conj}[thm]{Conjecture}
\theoremstyle{plain}
\newtheorem{defn}[thm]{Definition}
\theoremstyle{remark}
\newtheorem{example}[thm]{Example}
\newtheorem*{example*}{Example}
\newcommand{\R}{\mathbb{R}}
\newcommand{\N}{\mathbb{N}}
\newcommand{\Q}{\mathbb{Q}}
\newcommand{\Z}{\mathbb{Z}}
\DeclareMathOperator{\supp}{supp}
\DeclareMathOperator{\rank}{rank}
\DeclareMathOperator{\corank}{corank}
\newcommand{\mb}[1]{\mathbf{#1}}
\newcommand{\mbbm}[1]{\mathbbm{#1}}
\newcommand{\mc}[1]{\mathcal{#1}}
\newcommand{\abs}[1]{\left|#1\right|}
\newcommand{\set}[1]{\left\{#1\right\}}
\newcommand{\cl}[1]{\overline{#1}}
\DeclarePairedDelimiter{\floor}{\lfloor}{\rfloor}
\DeclarePairedDelimiter{\br}{(}{)}
\author{Vojtěch Kaluža\thanks{Vojtěch Kaluža was fully funded by the Austria Science Fund (FWF) [M 3100-N].}
	\\ \small{Institute of Science and Technology Austria}
	%\\ \small{Am Campus 1}
	%\\ \small{3400 Klosterneuburg, Austria}
	\and Vadym Koval
	\\ \small{École Polytechnique Fédérale de Lausanne}}
\title{Three observations on the Colin de Verdière spectral graph parameter}
\date{}
\begin{document}

\maketitle

\begin{abstract}
In this small note, we collect several observations pertaining to the famous spectral graph parameter $\mu$ introduced in 1990 by Y. Colin de Verdière. This parameter is defined as the maximum corank among certain matrices akin to weighted Laplacians; we call them CdV matrices.

First, we answer negatively a question mentioned in passing in the influential 1996 survey on $\mu$ by van der Holst, Lovász, and Schrijver concerning the Perron--Frobenious eigenvector of CdV matrices.

Second, by definition, CdV matrices posses certain transversality property. In some cases, this property is known to be satisfied automatically. We add one such case to the list.

Third, Y. Colin de Verdière conjectured an upper bound on $\mu(G)$ for graphs embeddable into a fixed closed surface. Following a recent computer-verified counterexample to a continuous version of the conjecture by Fortier Bourque, Gruda-Mediavilla, Petri, and Pineault, we also check using computer that the analogous example shows the failure of the conjectured upper bound on $\mu(G)$ for graphs embeddable into 10-torus as well as to several other larger surfaces.
\end{abstract}

\section{Introduction}
In 1990, \citet{CdV_orig, CdV_orig_en} introduced an algebraic graph parameter $\mu$, which was motivated by his study of the maximum multiplicity of the second smallest eigenvalue of Schrödinger operators on Riemannian surfaces. Already in \cite{CdV_orig}, he proved that the parameter $\mu$ is minor-monotone and that it can characterize planar graphs. Additionally, Colin de Verdi\`ere~\cite{CdV_orig} conjectured that $\chi(G)$, the (vertex) chromatic number of a graph $G$, is upper bounded by $\mu(G)+1$. This conjecture, still widely open, which interpolates between the four color theorem and the notorious Hadwiger's conjecture, together with many other intriguing properties that $\mu$ enjoys have generated a substantial interest in the parameter in the last 30 years.

Given a simple, unoriented graph $G=(V,E)$, we will associate to it a class of symmetric matrices $M\in\R^{V\times V}$ such that 
\begin{enumerate}[(i)]
	\item $M_{uv}<0$ whenever $uv\in E$ and $M_{uv}=0$ if $uv\notin E, u\neq v$,
	\item $M$ has exactly one negative eigenvalue.
\end{enumerate}
There is no condition on the diagonal of $M$. The set of all such matrices associated to $G$ is denoted by $\mc{M}(G)$. These matrices are sometimes called \emph{discrete Schrödinger operators} in the literature, since they are analogous to (continuous) Schrödinger operators on differentiable manifolds.

The value of $\mu(G)$ is defined as the maximum corank among all matrices $M\in\mc{M}(G)$
that satisfy an additional transversality condition due to \citet{Arnold}:
\begin{defn}[Strong Arnold Property (SAP)]\label{def:SAP}
	We say that $M\in\mc{M}(G)$ satisfies the \emph{Strong Arnold Property (SAP)} if and only if there is no non-zero symmetric matrix $X\in\R^{V\times V}$ such that $MX=\mathbf{0}$ and
	\begin{equation*}
		X_{uv}=0 \qquad\text{ whenever } uv \in E \text{ or } u=v.
	\end{equation*}
\end{defn}
Note that $X$ in the definition above is forced to be zero everywhere where $M\in\mc{M}(G)$ is or can be nonzero. We call matrices $M\in\mc{M}(G)$ that satisfy (SAP) \emph{CdV matrices for~$G$}.

In this short note we discuss three questions regarding $\mu$. The first one concerns the definition of $\mu$. It is well-known (see \citep[Thm.~2.5]{CdV_main}) that when $G$ contains at least one edge, then $\mu(G)$ is equal to the maximum of $\mu$ over the connected components of $G$. Thus, in the study of $\mu$, we can restrict ourselves to connected graphs. When $G$ is connected, the Perron--Frobenius theorem applies to $M\in\mc{M}(G)$; it implies that the multiplicity of the smallest eigenvalue $\lambda_1(M)$ of $M$ is one, and moreover, the corresponding eigenvector, called the Perron--Frobenius eigenvector, is strictly positive (or strictly negative). \Citet[p.~22]{CdV_main} asked in passing (in a different, but equivalent form) the following question:
\begin{quest}[{\citep[p.~22]{CdV_main}}]\label{q:all1}
	Is it true that for every connected graph $G$ there exists a CdV matrix of corank $\mu(G)$ such that its Perron--Frobenius eigenvector is $\mbbm{1}$?
\end{quest}
We provide a counterexample to this question in Section~\ref{s:all_one}.

The second question discussed in the present paper concerns (SAP). It is a crucial ingredient in any proof of the minor-monotonicity of $\mu$ known so far \citep{CdV_orig, vdHolst_PhD, CdV_main}. It has several equivalent reformulations with very distinct flavors \citep[Prop.~1]{SS_flat_and_SAP}, \citep[Lem.~10.25, Ex.~10.10]{Lovasz_book}. Despite this, the property and its impact on $\mu(G)$ for a given graph $G$ is far from being understood. It is known that whether $M\in\mc{M}(G)$ satisfies (SAP) or not depends only on $\ker(M)$ and $G$ \citep[Ex.~10.10]{Lovasz_book}. Few consequences of (SAP) in terms of the connectivity of subgraphs of $G$ induced by vectors in $\ker(M)$ are known \citep[Sec.~2.5]{CdV_main}, but the exact relationship between (SAP) and the combinatorics of $\ker(M)$ remains mysterious.

There are several classes of graphs $G$ for which it is known that all $M\in\mc{M}(G)$ satisfy (SAP); namely, this is true whenever $G$ is a path, 2-connected outer planar, 3-connected planar \cite{vdHolst_PhD, vdHolst_SAP_and_connectivity} or 4-connected linkless embeddable graph \citep{SS_flat_and_SAP}. It is an interesting open question whether this list can be continued:
\begin{quest}[{\citep{SS_flat_and_SAP}}]\label{q:SAP_and_connectivity}
Given a $\mu(G)$-connected graph $G$, does every $M\in\mc{M}(G)$ satisfies (SAP)?
\end{quest}
We do not answer this question, but it motivated the proposition presented below. 

For general graphs $G$, it is easy to see that every $M\in\mc{M}(G)$ with $\corank(M)\leq 1$ has (SAP). In this direction, we add the following observation, proven in Section~\ref{s:SAP}:
\begin{prop}\label{prop:corank_2_SAP}
	Let $G=(V,E)$ be a connected graph and $M\in\mc{M}(G)$ a matrix of corank $2$. Then $M$ satisfies SAP.
\end{prop}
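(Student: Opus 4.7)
The plan is proof by contradiction: I would assume that $M$ has corank $2$ yet fails (SAP), and derive a contradiction by combining the Perron--Frobenius eigenvector of $M$ with Cauchy interlacing on a carefully chosen principal submatrix.

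First, I would analyse the obstructing $X$. Suppose $X\neq 0$ is symmetric with $MX=0$ and $X_{uv}=0$ whenever $uv\in E$ or $u=v$. Each column of $X$ lies in the two-dimensional $\ker M$, so writing $X = V T V^T$ for $V$ a matrix whose columns span $\ker M$ and $T$ symmetric $2\times 2$, and diagonalising $T$, expresses $X = \mu_1 w_1 w_1^T + \mu_2 w_2 w_2^T$ with $w_i \in \ker M$. The vanishing of the diagonal of $X$ rules out $T$ being positive or negative semidefinite (otherwise the PSD/NSD matrix $X$ would have every $X_{uu}=0$, forcing each column to vanish), so $\mu_1\mu_2<0$; after rescaling, the substitution $y = \sqrt{\mu_1}w_1 + \sqrt{-\mu_2}w_2$, $z = \sqrt{\mu_1}w_1 - \sqrt{-\mu_2}w_2$ gives $X = \tfrac12(yz^T + zy^T)$ with $y,z \in \ker M$ linearly independent. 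The diagonal and edge conditions on $X$ then translate into (a) $\supp(y)\cap\supp(z) = \emptyset$ and (b) no edge of $G$ joins $\supp(y)$ to $\supp(z)$.

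Setting $U_y=\supp(y)$, $U_z=\supp(z)$, $U=U_y\cup U_z$, property~(b) makes the principal submatrix $M|_U$ decompose as $A_y\oplus A_z$ with $A_y=M_{U_y,U_y}$, $A_z=M_{U_z,U_z}$; the restrictions $\tilde y,\tilde z$ satisfy $A_y\tilde y=0$, $A_z\tilde z=0$. Since $M$ has exactly one negative and two zero eigenvalues, Cauchy interlacing yields $n_-(M|_U)+n_0(M|_U)\leq 3$ and $n_-(M|_U)\leq 1$, so $n_-(A_y)+n_-(A_z)\leq 1$; WLOG $A_y$ is positive semidefinite. The Perron--Frobenius eigenvector $\phi>0$ of $M$ satisfies $M\phi=\lambda_1\phi$ with $\lambda_1<0$, which combined with $My=0$ gives $y\perp\phi$, hence $\sum_{u\in U_y}\tilde y_u\phi_u=0$. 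Decompose $A_y$ over the connected components $C_1,\ldots,C_{k_y}$ of $G[U_y]$: each block $A_y^{(i)}$ is a PSD discrete Schr\"odinger operator on the connected graph $C_i$ with $\tilde y|_{C_i}\neq 0$ in its kernel. Applying Perron--Frobenius to $cI-A_y^{(i)}$ for large $c$ shows that the zero eigenspace of each $A_y^{(i)}$ is one-dimensional and spanned by a strictly positive vector; hence $\tilde y$ has constant sign on each $C_i$. Combining this with $\phi|_{U_y}>0$ and $\sum\tilde y_u\phi_u=0$ forces $k_y\geq 2$, whence $\corank(A_y)=k_y\geq 2$.

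The contradiction now appears: $\corank(A_z)\geq 1$ since $\tilde z\in\ker A_z$, so $n_0(M|_U)=\corank(A_y)+\corank(A_z)\geq 3$. If $A_z$ is also PSD, the same argument applied to $z$ yields $\corank(A_z)\geq 2$ and hence $n_0(M|_U)\geq 4$; if instead $A_z$ has one negative eigenvalue, then $n_-(M|_U)+n_0(M|_U)\geq 1+3=4$. Either outcome violates the Cauchy bound $n_-(M|_U)+n_0(M|_U)\leq 3$. The most delicate step I expect is the first: extracting from the algebraic constraints on $X$ the explicit factorisation $X=\tfrac12(yz^T+zy^T)$ together with the two disjointness properties, since once that structure is in place, the remainder is a clean combination of Perron--Frobenius for connected Schr\"odinger operators and Cauchy interlacing.
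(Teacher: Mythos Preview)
Your proof is correct and takes a genuinely different route from the paper's. Both arguments begin by expressing the obstructing matrix $X$ via the two-dimensional kernel, but the paper stays with the spectral form $X=xx^T-yy^T$ (so $\supp(x)=\supp(y)$) and then analyses the connected components $C_1,\ldots,C_k$ of $G[\supp(x)]$: Perron--Frobenius on each block forces $x_{C_i}=\pm y_{C_i}$, the Perron--Frobenius eigenvector of $M$ eliminates $k=2,3$ by a sign count, and an auxiliary lemma of Pendavingh (stating that $k=\dim\{w\in\ker M:\supp(w)\subseteq\supp(x)\}+1$) rules out $k\ge 4$ by producing three independent kernel vectors. You instead rotate once more to $X=\tfrac12(yz^T+zy^T)$ with \emph{disjoint} supports and no edges between them, so that $M|_{\supp(y)\cup\supp(z)}=A_y\oplus A_z$; then Cauchy interlacing (yielding $n_-(M|_U)\le 1$ and $n_-(M|_U)+n_0(M|_U)\le 3$) replaces the component-counting lemma entirely, and the Perron--Frobenius argument on the PSD block $A_y$ delivers $\corank(A_y)\ge 2$, which already overruns the interlacing bound. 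Your version is more self-contained---it needs only Perron--Frobenius and interlacing, not the external lemma---and the disjoint-support factorisation is a clean structural device; the paper's version, by contrast, keeps closer contact with the sign-pattern combinatorics of $\ker(M)$ emphasised elsewhere in the note.
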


In analogy to the genesis of Question~\ref{q:SAP_and_connectivity}, one could wonder whether for every $k$-connected graph $G$ every $M\in\mc{M}(G)$ with $\corank(M)\leq k+1$ satisfied (SAP) or not. We do not know if it is true for $k=2$, but we show in Observation~\ref{obs:quest_on_SAP_tight} that it fails for $k\geq 3$. Even more strongly, we show there that for every $k\geq 3$ there is a $k$-connected graph $G$ and $M\in\mc{M}(G)$ with $\corank(M)=4$ that does not satisfy (SAP).

In the third part we present a computer-verified counterexample to the following conjecture:
\begin{conj}[{Colin de Verdière\footnote{The conjecture is apparently older than \citep{YCdV_survey_on_discrete_Schroedinger} demonstrates, as is witnessed by \citet[Conj.~16]{Pendavingh_PhD}, who attributed it to \citeauthor{YCdV_survey_on_discrete_Schroedinger}. However, the reference in \citep[Conj.~16]{Pendavingh_PhD} does not mention the conjecture. We are not aware of any explicit mention of the conjecture by \citeauthor{YCdV_survey_on_discrete_Schroedinger} older than \citep{YCdV_survey_on_discrete_Schroedinger}. On the other hand, it follows directly from Conjecture~\ref{c:CdV_surfaces_continuous} by Theorem~\ref{thm:discrete_to_cont}.}~\citep{YCdV_survey_on_discrete_Schroedinger}}]\label{c:CdV_surfaces}
	For every closed surface $S$ and every graph $G$ embedded into $S$ it holds that $\mu(G)\leq\gamma(S)-1$, where $\gamma(S)$ is the maximum of the vertex chromatic number $\chi(H)$ among all graphs $H$ embeddable into $S$.
\end{conj}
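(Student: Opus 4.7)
The plan is to disprove Conjecture~\ref{c:CdV_surfaces} (as foreshadowed by the abstract) by constructing, for a fixed target surface $S$ such as the $10$-torus, a graph $G$ embedded in $S$ with $\mu(G) > \gamma(S) - 1$. The overall strategy is to discretize the continuous counterexample of Fortier Bourque, Gruda-Mediavilla, Petri, and Pineault and to certify the discrete corank by computer.

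First, I would pin down $\gamma(S)$ for each candidate surface from Heawood's formula together with the Ringel--Youngs theorem; for the orientable surface of genus $g$ this gives an explicit integer which $\mu(G)$ must exceed. Second, I would build the graph $G$. The footnote hints at a bridge \emph{continuous to discrete} via Theorem~\ref{thm:discrete_to_cont}: the continuous counterexample is a Riemannian metric on $S$ whose Laplace--Beltrami operator (or a Schrödinger perturbation of it) has a high-multiplicity second eigenvalue. I would take a sufficiently fine geodesic triangulation $T$ of this Riemannian surface, set $G$ to be its $1$-skeleton, and use the standard cotangent weights for the off-diagonal entries (which are strictly negative on edges) together with a diagonal term encoding the potential, producing a matrix $M \in \mc{M}(G)$. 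A standard FEM-type convergence argument then ensures that for fine enough $T$, the kernel of $M$ approximates the continuous eigenspace.

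Third, I would run the actual computation: assemble $M$ symbolically or in exact rational arithmetic; compute $\corank(M)$; and verify the Strong Arnold Property. The (SAP) check is a finite-dimensional linear-algebra task: writing $\ker M$ as the span of vectors $v_1,\dots,v_k$, (SAP) is equivalent to the linear map $X\mapsto MX$ being injective on the space of symmetric matrices supported on the non-edge, off-diagonal positions, which in turn is equivalent to the $\binom{k+1}{2}$ matrices $v_iv_j^\top + v_jv_i^\top$ being linearly independent after restriction to those positions. This reduces to computing the rank of one explicit matrix. Once $M$ is shown to be a CdV matrix and its corank is computed, we read off $\mu(G)\ge \corank(M)$ and compare to the $\gamma(S)-1$ obtained in the first step.

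The main obstacle is twofold. On the mathematical side, one must ensure the discretization is fine enough that the multiplicity of the second eigenvalue in the continuous counterexample carries over to the discrete corank without accidental splitting, while being coarse enough that the computer certification of (SAP) and of the exact corank is feasible. On the computational side, because $\mu$ is an integer defined through the \emph{exact} kernel dimension, one must rule out that apparent near-zero eigenvalues are numerical artefacts; this is best handled by performing the critical step in exact arithmetic over $\Q$ (or a suitable algebraic extension), which is realistic given the sparsity and symmetry of the matrices arising from triangulations. Finally, to extend the counterexample from the $10$-torus to the other surfaces listed in the abstract, I would repeat the same pipeline, adjusting $S$ and the triangulation and re-applying the Heawood bound for $\gamma(S)$.
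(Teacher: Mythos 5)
Your overall strategy---refute the conjecture by exhibiting a graph embedded in $T_{10}$ with $\mu\geq 16>13=\gamma(T_{10})-1$, certifying the corank and (SAP) by exact computer arithmetic---is exactly the paper's, but the step where you actually produce the matrix has a genuine gap. You propose to discretize the continuous counterexample by a fine geodesic triangulation with cotangent weights and invoke an FEM convergence argument so that ``the kernel of $M$ approximates the continuous eigenspace.'' Approximation is not enough: $\corank(M)$ is an exact algebraic quantity, and a generic discretization splits the $16$-fold continuous eigenvalue into a cluster of distinct discrete eigenvalues, so after any scalar shift of the diagonal you would typically obtain corank $1$, not $16$. There is no theorem transporting the continuous counterexample to the discrete side (Theorem~\ref{thm:discrete_to_cont} only gives $\mu(G)\leq m(S)$, i.e., the wrong direction, which is precisely why the discrete conjecture did not fall automatically in December 2023). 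To force exact degeneracy you would need an equivariant discretization plus a representation-theoretic argument, which your proposal does not supply; moreover, cotangent weights need not be strictly negative on edges (obtuse triangles give zero or positive weights), so membership in $\mc{M}(G)$ is itself not guaranteed, and a triangulation fine enough for spectral convergence would likely be too large for the exact rank computation you need.

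The paper sidesteps all of this: it takes $G_{10}$ to be the $1$-skeleton of Conder's rotary triangulation \texttt{T10.1} itself (only $54$ vertices, $8$-regular, $144$ triangular faces, genus $10$ by Euler's formula), computes the characteristic polynomial of $A_{G_{10}}$ exactly, finds that $\lambda=1+\sqrt{7}$ is the second largest adjacency eigenvalue with multiplicity $16$, and sets $M_{10}=\lambda I_{G_{10}}-A_{G_{10}}$, which therefore lies in $\mc{M}(G_{10})$ with corank exactly $16$---no limiting argument at all, only exact linear algebra over $\Q[\sqrt{7}]$. The (SAP) verification is then a full-rank check of the $2862\times 1215$ system coming directly from Definition~\ref{def:SAP}. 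Your alternative (SAP) criterion via a kernel basis is fine in spirit, but note the restriction must be to the edge-and-diagonal positions (where $X$ is forced to vanish): (SAP) holds iff the matrices $v_iv_j^\top+v_jv_i^\top$ restricted to those positions are linearly independent, not their restrictions to the non-edge off-diagonal positions as written.
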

Since the resolution of Heawood's conjecture \citep{Ringel_Youngs-Heawood_orientable, Youngs-Heawood_nonorientable} it is known that $\gamma(S)$ is equal to the largest $n\in\N$ such that $K_n$ embeds into $S$ (the case of the sphere is special and is subject to the famous four color theorem). This, in turn, satisfies
\begin{equation}\label{eq:Heawood}
	\gamma(S)=\floor*{\frac{7+\sqrt{49-24\chi(S)}}{2}},
\end{equation}
where $\chi(S)$ denotes \emph{the Euler characteristic of $S$}, except when $S$ is the Klein bottle, in which case $\gamma(S)$ is one less than what \eqref{eq:Heawood} predicts.

Conjecture~\ref{c:CdV_surfaces} is a formally weaker version of the following conjecture made by Colin de Verdi\`ere:
\begin{conj}[{\citet{CdV_multiplicity_bounds_mu}, \citep[Ch.~4]{CdV_Spectre_Graphes}}]\label{c:CdV_surfaces_continuous}
	For every surface $S$, it holds that $m(S)=\gamma(S)-1$, where $m(S)$ stands for the maximum multiplicity of the second smallest eigenvalue among all (continuous) Schrödinger operators\footnote{We will not need any notions from Riemannian geometry nor the theory of partial differential operators formally in the present work, so we also do not define any of them here. We just use them to describe the origin of the problem considered here and what is known about it so far. An interested reader is referred to the work of Colin de Verdi\`ere cited in this note and the references therein.} on $S$.
\end{conj}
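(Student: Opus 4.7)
The plan is to prove the two inequalities $m(S) \geq \gamma(S)-1$ and $m(S) \leq \gamma(S)-1$ separately, with the discrete--continuous transfer of Theorem~\ref{thm:discrete_to_cont} tying the analytic statement throughout to the combinatorial counterpart formulated in Conjecture~\ref{c:CdV_surfaces}.

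For the lower bound, I would start from an embedding of $K_{\gamma(S)}$ into $S$, which exists by the resolution of Heawood's conjecture by Ringel and Youngs. The idea is then to design a Riemannian metric (or, more flexibly, a Schrödinger potential) that concentrates near a thin tubular neighborhood of the embedded $K_{\gamma(S)}$, so that the low-lying spectrum of the resulting operator is well-approximated by that of a weighted graph Laplacian on $K_{\gamma(S)}$. Since the second-smallest eigenvalue of the Laplacian of $K_n$ has multiplicity $n-1$, a standard semiclassical collapse argument should produce a multiplicity-$(\gamma(S)-1)$ eigenvalue at the second spectral level of the continuous operator. Alternatively, if Theorem~\ref{thm:discrete_to_cont} admits a converse that lifts a discrete CdV matrix of high corank to a continuous operator with the corresponding eigenvalue multiplicity, one could invoke that converse directly on a CdV matrix of corank $\gamma(S)-1$ carried by $K_{\gamma(S)}$.

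The upper bound is the essential difficulty, and I expect it to be the main obstacle. The natural strategy is to apply Theorem~\ref{thm:discrete_to_cont} to convert a continuous Schrödinger operator realizing $m(S)$ into a graph $G$ embedded in $S$ with $\mu(G) \geq m(S)$, and then to combine this with Conjecture~\ref{c:CdV_surfaces} to conclude $m(S)\leq \gamma(S)-1$. The obstacle is that Conjecture~\ref{c:CdV_surfaces} is itself unresolved; attacking the upper bound directly via Courant's nodal-domain inequality and its surface refinements (Cheng, Besson, Nadirashvili, Sévennec) yields bounds that grow roughly linearly with the genus, much faster than $\gamma(S)-1 = O(\sqrt{|\chi(S)|})$, and therefore cannot close the gap. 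In fact, the third part of the present note exhibits a graph embedded in the 10-torus that violates Conjecture~\ref{c:CdV_surfaces}; by Theorem~\ref{thm:discrete_to_cont}, the same example witnesses the failure of Conjecture~\ref{c:CdV_surfaces_continuous} on that surface, so any proof strategy along the lines above is doomed at sufficiently large genus, and the correct upper bound at high genus must differ from the one conjectured.
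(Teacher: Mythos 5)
This statement is a conjecture, not a theorem: the paper offers no proof of it, and in fact its third section (together with the cited work of Fortier Bourque, Gruda-Mediavilla, Petri, and Pineault) shows that it is \emph{false} for all surfaces $S$ with $\chi(S)\in[-28,-18)$, since $\mu(G_{10})\geq 16$ while $\gamma(S)-1\leq 13$ there, and Theorem~\ref{thm:discrete_to_cont} then forces $m(S)\geq 16$. Your final conclusion --- that no proof of the upper bound can exist at large genus and that the conjectured value must be wrong there --- is therefore the correct assessment, and it matches the paper's stance; the conjecture remains open (and true) only for surfaces with $\chi(S)\geq -3$.

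Two directional issues in your proposal are worth fixing. For the lower bound $m(S)\geq\gamma(S)-1$ you do not need a semiclassical collapse construction nor any converse to Theorem~\ref{thm:discrete_to_cont}: the theorem as stated already gives $\mu(G)\leq m(S)$ for every $G$ embeddable into $S$, so applying it to $K_{\gamma(S)}$ (which embeds by Ringel--Youngs) with $\mu(K_n)=n-1$ yields the bound in one line --- this is exactly the remark the paper makes after stating the conjecture. For the upper bound, your proposed reduction runs Theorem~\ref{thm:discrete_to_cont} in the wrong direction: the theorem converts an \emph{embedded graph} into a lower bound on $m(S)$; it does not produce, from a continuous Schrödinger operator realizing $m(S)$, a graph $G$ embedded in $S$ with $\mu(G)\geq m(S)$. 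Consequently one cannot deduce the continuous upper bound from the discrete Conjecture~\ref{c:CdV_surfaces}; the implication goes the other way (the continuous conjecture implies the discrete one), which is precisely why the paper needed a separate, computer-verified argument to transfer the failure from the continuous setting to the discrete one.
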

The relation between Conjectures~\ref{c:CdV_surfaces} and \ref{c:CdV_surfaces_continuous} is made clear by Theorem~\ref{thm:discrete_to_cont} below.	 
\begin{thm}[{\citet[Thm.~7.1]{CdV_multiplicity_bounds_mu}}]\label{thm:discrete_to_cont}
	For every surface $S$ and every graph $G$ embeddable into $S$ it holds that $\mu(G)\leq m(S)$.
\end{thm}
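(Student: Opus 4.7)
The approach is to realize a CdV matrix $M\in\mc{M}(G)$ of corank $\mu(G)$ as the effective operator of a family of continuous Schrödinger operators on $S$ concentrated near $G$. Fix an embedding of $G$ into $S$ and, for $\epsilon>0$ small, construct a Riemannian metric $g_\epsilon$ and potential $V_\epsilon$ on $S$ of graph-like type: thicken each vertex $v$ to a small disk $D_v(\epsilon)$, thicken each edge $uv$ to a thin cylinder $T_{uv}(\epsilon)$ whose length-to-cross-section ratio is calibrated so that the leading-order coupling constant equals $-M_{uv}$, and impose a large barrier potential on the complement of this thickened graph. On the disks, tune $V_\epsilon$ so that the limiting on-site energies match the diagonal entries $M_{vv}$.

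Next, compare the low-lying spectrum of $H_\epsilon=-\Delta_{g_\epsilon}+V_\epsilon$ with that of $M$ via the min--max principle. Test functions that are essentially constant on each $D_v(\epsilon)$, decay linearly through the tubes, and vanish in the barrier region span a $|V|$-dimensional subspace on which the quadratic form of $H_\epsilon$, once properly normalized in $\epsilon$, converges to the form $x\mapsto\langle Mx,x\rangle$ on $\R^V$. Conversely, any eigenfunction of $H_\epsilon$ whose eigenvalue stays bounded as $\epsilon\to 0$ must, up to small error, be essentially constant on each disk and negligible outside the thickened graph, since otherwise the transverse Laplacian modes on the tubes or the barrier potential force the Rayleigh quotient to blow up. Consequently, the first $|V|$ eigenvalues of $H_\epsilon$ converge to the eigenvalues of $M$. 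Because $M$ has exactly one negative eigenvalue followed by an eigenvalue $0$ of multiplicity $\mu(G)=\corank(M)$, the second smallest eigenvalue of $H_\epsilon$ acquires multiplicity at least $\mu(G)$ for all sufficiently small $\epsilon$. By definition of $m(S)$ this yields $\mu(G)\le m(S)$.

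The main obstacle is making this effective operator argument rigorous on a graph-like manifold: one has to prove that low-energy eigenfunctions of $H_\epsilon$ genuinely localize on the thickened vertices with quantitative error on the tubes, and that the gap above $\lambda_2(H_\epsilon)$ does not close in the limit, so that multiplicity is preserved under the min--max comparison. These are by now standard semiclassical estimates, but the calibration of tube widths, barrier heights, and on-site potentials against the entries of $M$ requires care; this is precisely the technical content of \citep[Thm.~7.1]{CdV_multiplicity_bounds_mu}.
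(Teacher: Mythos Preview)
The paper does not contain a proof of this theorem; it is cited from \citep{CdV_multiplicity_bounds_mu} and used as a black box. So there is no in-paper proof to compare against, and I can only assess your sketch on its own terms.

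Your outline captures the geometric construction correctly: fatten the embedded graph, calibrate tube and disk data against the entries of $M$, and show that the bottom of the spectrum of $H_\epsilon$ converges to the spectrum of $M$. However, there is a genuine gap at the step where you conclude that ``the second smallest eigenvalue of $H_\epsilon$ acquires multiplicity at least $\mu(G)$ for all sufficiently small~$\epsilon$.'' Convergence of eigenvalues via min--max only tells you that $\mu(G)$ eigenvalues of $H_\epsilon$ cluster near~$0$; it does \emph{not} tell you they coincide. Generically they will split, and then no single $H_\epsilon$ witnesses multiplicity~$\mu(G)$. Your remark about ``the gap above $\lambda_2(H_\epsilon)$'' addresses the wrong issue: the danger is not contamination from above but the cluster itself spreading out.

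This is exactly where the Strong Arnold Property enters, and your sketch never invokes it. The point of (SAP) is that it is a \emph{transversality} condition (this is Arnold's original formulation): the stratum of symmetric operators with a zero eigenvalue of multiplicity $\mu(G)$ meets the family of admissible operators transversally at~$M$. In Colin de Verdi\`ere's proof, this transversality is what allows one, after the limiting argument puts $H_\epsilon$ close to the stratum, to perturb $H_\epsilon$ \emph{within the class of Schr\"odinger operators on~$S$} so as to land exactly on the stratum, i.e., to force the $\mu(G)$ nearby eigenvalues to coalesce. Without (SAP) this step fails, and indeed there exist $M\in\mc{M}(G)$ with $\corank(M)>\mu(G)$ that do not satisfy (SAP); for such~$M$ your argument would overshoot~$m(S)$. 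You must use that the optimal CdV matrix satisfies (SAP), and you must explain how transversality in the finite-dimensional picture transfers to the infinite-dimensional family of Schr\"odinger operators. That transfer is the actual technical heart of \citep[Thm.~7.1]{CdV_multiplicity_bounds_mu}, not the graph-like manifold estimates you emphasize.
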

We note that $\mu(K_n)=n-1$ for every $n\in\N$ (see \citep{CdV_orig, CdV_orig_en}), and thus, $m(S)\geq \gamma(S)-1$ for every surface $S$. Hence, the difficult part of Conjecture~\ref{c:CdV_surfaces_continuous} is the upper bound.

When the surface $S$ is the sphere $S^2$, the projective plane $\R P^2$, the torus $T_1$ or the Klein bottle, the validity of Conjecture~\ref{c:CdV_surfaces_continuous} was established in \citep{Cheng_Schroedinger_sphere, Besson_Schroedinger_torus, CdV_multiplicity_bounds_mu}. For Conjecture~\ref{c:CdV_surfaces}, the cases of $S^2, \R P^2$ and $T_1$ were established directly in \citep{vdHolst_planar, Pendavingh_PhD}.

Currently the best general upper bound on $m(S)$ is $5-\chi(S)$, as was proven by \citet{Sevennec_general_Schroedinger_bound_surfaces}, and applies to all surfaces with $\chi(S)<0$. This extends the validity of Conjectures~\ref{c:CdV_surfaces_continuous} and \ref{c:CdV_surfaces} to all surfaces $S$ with $\chi(S)\geq -3$. No better bound is known in the discrete setting at present; the only general result in this direction that we are aware of and that does not use Theorem~\ref{thm:discrete_to_cont} was proven in \citep{Pendavingh_PhD, CdV_on_surface_lin_discrete} and shows that $\mu(G)\leq 7-2\chi(S)$ when $G$ is embeddable into $S$. Note that the conjectured upper bound is $O\br*{\sqrt{\abs{\chi(S)}}}$, while all bounds proven so far are of order $O(\abs{\chi(S)})$.

There has been quite a lot of research into Conjecture~\ref{c:CdV_surfaces_continuous} when restricted to hyperbolic surfaces and to Riemannian Laplacians instead of the general Schrödinger operators; while some of these works achieve sublinear dependence on $\chi(S)$, none of the results available comes close to the conjectured $O\br*{\sqrt{\abs{\chi(S)}}}$ bound. As these results are out of the scope of this note, we only refer to one of the most recent papers \cite{Surfaces_recent_survey} of this type, which also contains a nice summary of the present state.

In December 2023, \citet{CdV_surfaces_cont_counter-examples} identified two counterexamples to Conjecture~\ref{c:CdV_surfaces_continuous}; namely, using rigorous\footnote{By a rigorous computation we mean a computation output of which is not affected by numerical imprecision. At least in principle, such a computation could be used to generate a formal proof.} computation with \verb|SageMath|, they showed that $m(T_{10})\geq 16$ and $m(T_{17})\geq 21$, where $T_n$ denotes a surface homeomorphic to the sphere with $n$ handles attached to it. By \eqref{eq:Heawood}, the conjectured bounds in these cases are $\gamma(T_{10})-1=13$ and $\gamma(T_{17})-1=16$, respectively. The two examples are (continuous) Riemannian Laplacians of two \emph{triangle surfaces}, \texttt{T10.1} and \texttt{T17.1}, from the list of all triangle surfaces of genera 2 to 101 produced by \citet{Conder_list}.

Since Theorem~\ref{thm:discrete_to_cont} provides only one implication,
the aforementioned counterexamples do not automatically imply failure of Conjecture~\ref{c:CdV_surfaces}. We bridge this gap in Proposition~\ref{prop:counter_ex} and demonstrate the failure of Conjecture~\ref{c:CdV_surfaces} in the case of $T_{10}$ using rigorous computer verification in \verb|SageMath|~\citep{sagemath} in Section~\ref{s:surfaces}. As we discuss there, it actually implies failure of Conjecture~\ref{c:CdV_surfaces} for all surfaces $S$ with $\chi(S)\in [-28, -18)$.

All the code used to construct the counterexample and to verify its properties is included in the ancillary file \verb|code.sage|. 
\section{Preliminaries}
Here we summarize the notation and basic background used in the paper. The background specific to a particular section is stated at the beginning of the section.

\paragraph{Graphs.} For general background on graphs we refer to \citep{Diestel}. The graphs considered here are always finite, simple and undirected. Given a graph $G$, we denote by $V(G)$ and $E(G)$ its sets of vertices and edges, respectively, unless they are specified explicitly.
Whenever $u,v\in V(G)$, we write just $uv$ instead of $\set{u, v}$ for simplicity. If $S\subseteq V(G)$, we let $G[S]$ stand for the subgraph of $G$ induced by $S$, i.e., the graph $(S, \set{uv\in E(G)\colon u,v\in S})$. By a \emph{connectivity} of a graph we mean its vertex connectivity.

By $K_n$ we denote the complete graph on $n$ vertices and $K_{a,b}$ stands for the complete bipartite graph with parts of size $a$ and $b$.

\paragraph{Vectors and matrices.} For general background on matrices we refer to \citep{Horn--Johnson}. The zero vector and the zero matrix of appropriate dimension are denoted by $\mb{o}$ and $\mb{0}$, respectively. The inequalities like $\leq, >$ applied to vectors and matrices should be interpreted as applying to every entry.

Given a graph $G$ and $x\in\R^{V(G)}$, we write $\supp(x):=\set{u\in V(G)\colon x_u\neq 0}$ for the \emph{support of $x$}. Given a matrix $M\in\R^{n\times m}$, $\corank(M)$ refers to the dimension of $\ker(M)$.

We denote by $A_G\in\Z^{V(G)\times V(G)}$ the \emph{adjacency matrix} of the graph $G$, i.e., the symmetric $0/1$-matrix such that $(A_G)_{u,v}=1$ if and only if $uv\in E(G)$. By eigenvalues and a characteristic polynomial of $G$ we mean the eigenvalues and the characteristic polynomial of $A_G$.
Additionally, we write $I_G$ for the $V(G)\times V(G)$ identity matrix. For $S\subset V(G)$ we let $\mbbm{1}_S$ be the vector that is equal to $1$ on every $v\in S$ and 0 otherwise.

All matrices considered in this paper are real and almost all are symmetric. By the spectral theorem, for every $n\times n$ symmetric real matrix $M$ there is an orthogonal basis of $\R^n$ consisting of eigenvectors of $M$. Thus, $M$ can be diagonalized by its eigenvectors. Yet differently, $M$ can be written as a linear combination of rank-1 matrices generated by the eigenvectors of $M$.

\paragraph{Discrete Schrödinger operators.} Given a connected graph $G$ and $M\in\mc{M}(G)$ (defined in the introduction), there is $c>0$ such that $cI_G-M\geq\mb{0}$. Thus, the latter matrix is \emph{irreducible}, and hence, the classical Perron--Frobenius theorem applies to it ensuring that its largest eigenvalue is simple and the corresponding eigenvector is strictly positive (or strictly negative). Since adding $cI_G$ to a matrix shifts all its eigenvalues by $c$ and preserves all eigenvectors, we can say that the Perron--Frobenius theorem applies also to $M$, by which we mean that it ensure that the \emph{smallest} eigenvalue of $M$ is simple and that the corresponding eigenvector is strictly positive; we refer to these objects as the Perron--Frobenius eigenvalue and the Perron--Frobenius eigenvector of $M$.

As eigenvectors belonging to different eigenvalues are always orthogonal, the previous discussion also implies that whenever $z>\mb{o}$ is an eigenvector of $M\in\mc{M}(G)$ for $G$ connected, then $z$ is the Perron--Frobenius eigenvector of $M$.
\section{The Perron--Frobenius eigenvector of CdV matrices}\label{s:all_one}
Below we describe a counterexample to Question~\ref{q:all1}.
\begin{example}
    	Consider a connected graph $G$ on 7 vertices such that its complement is a disjoint union of a triangle and a $3$-star. By \citep[Thm.~5.4]{CdV_main}, such graph satisfies $\mu(G)=7-2=5$. By definition, its CdV matrix has the form 
    	\[
    	\begin{pmatrix}
    		? & 0 & 0 & * & * & * & *\\
    		0 & ? & 0 & * & * & * & *\\
    		0 & 0 & ? & * & * & * & *\\
    		* & * & * & ? & * & * & 0\\
    		* & * & * & * & ? & * & 0\\
    		* & * & * & * & * & ? & 0\\
    		* & * & * & 0 & 0 & 0 & ?
    	\end{pmatrix},
    	\]
    where each `$*$' symbol is to be replaced with a negative number, while each '?' can be any real number. In particular, there is no zero row/column.
    
    We know that an optimal CdV matrix has row rank $2$. To get a basis of the row space, we can take the \nth{1} row as one of the basis vectors. The second one cannot be the \nth{2} or the \nth{3} row, as each of them shares a common zero with the \nth{1} row and there is no all $0$ column vector in any CdV matrix for $G$. We can take the \nth{4} row as the second basis row, since it is always linearly independent from the \nth{1} row.
    By the above considerations, each CdV matrix of $G$ looks like the following: 
\[  
    \begin{pmatrix}
    	0 & 0 & 0 & * & * & * & *\\
    	0 & 0 & 0 & * & * & * & *\\
    	0 & 0 & 0 & * & * & * & *\\
    	* & * & * & ? & * & * & 0\\
    	* & * & * & * & ? & * & 0\\
    	* & * & * & * & * & ? & 0\\
    	* & * & * & 0 & 0 & 0 & ?
    \end{pmatrix}.
\]

	From now on we assume that $\mbbm{1}$ is an eigenvector of the matrix we consider. Since the graph is connected, the Perron--Frobenius theorem implies that $\mbbm{1}$ belongs to the smallest eigenvalue, which is negative by definition. Therefore, each row sums to the same negative number.
	
	We can see that the \nth{2} and the \nth{3} rows are just some multiples of the \nth{1}. From that we conclude that those three rows are identical. Thus, we derived that any CdV matrix of $G$ has the following form:
\[
	\begin{pmatrix}
	 	0 & 0 & 0 & a_1 & a_2 & a_3 & a_4\\
	 	0 & 0 & 0 & a_1 & a_2 & a_3 & a_4\\
	 	0 & 0 & 0 & a_1 & a_2 & a_3 & a_4\\
	 	a_1 & a_1 & a_1 & b & * & * & 0\\
	 	a_2 & a_2 & a_2 & * & ? & * & 0\\
	 	a_3 & a_3 & a_3 & * & * & ? & 0\\
	 	a_4 & a_4 & a_4 & 0 & 0 & 0 & ?
	 \end{pmatrix}.
\]
 
 	As \nth{5} and \nth{6} rows have zero last coordinate, they are some multiple of the \nth{4} row (as the \nth{1} row is non-zero in the last coordinate). Moreover, as they have the same sum of entries as the \nth{4} row, they all are identical. The same applies to columns. Therefore, we deduce that our CdV matrix looks as follows:
\[ 	
 	\begin{pmatrix}
 		0 & 0 & 0 & a & a & a & a_4\\
 		0 & 0 & 0 & a & a & a & a_4\\
 		0 & 0 & 0 & a & a & a & a_4\\
 		a & a & a & b & b & b & 0\\
 		a & a & a & b & b & b & 0\\
 		a & a & a & b & b & b & 0\\
 		a_4 & a_4 & a_4 & 0 & 0 & 0 & ?
 	\end{pmatrix}.
\]
 
 	The last coordinate of the \nth{7} row is uniquely determined by the requirement that this row is linearly dependent with the \nth{1} and the \nth{4} rows. Additionally, the sum of entries of the \nth{3} and the \nth{4} rows must match:
\[
	\begin{pmatrix}
 		0 & 0 & 0 & a & a & a & a_4\\
 		0 & 0 & 0 & a & a & a & a_4\\
 		0 & 0 & 0 & a & a & a & a_4\\
 		a & a & a & \frac{a_4}{3} & \frac{a_4}{3} & \frac{a_4}{3} & 0\\
 		a & a & a & \frac{a_4}{3} & \frac{a_4}{3} & \frac{a_4}{3} & 0\\
 		a & a & a & \frac{a_4}{3} & \frac{a_4}{3} & \frac{a_4}{3} & 0\\
 		a_4 & a_4 & a_4 & 0 & 0 & 0 & -\frac{a_4}{3} \frac{a_4^2}{a^2}
 	\end{pmatrix}.
\]
 
 	As sums of the entries of all the rows must be the same, we get $3a+a_4=3a_4-\frac{a_4^3}{3a^2}$. Since both $a$ and $a_4$ are negative, we can write $a_4=xa$ for some $x>0$. Thus, the last equation becomes $-\frac{x^3}{3}+2x-3=0$, which in turn is equivalent to $(x+3)(x^2-3x+3)=0$. However, this equation does not have any positive roots, completing the counterexample.
\end{example}
\section{The Strong Arnold Property and connected graphs}\label{s:SAP}
Given a graph $G=(V, E)$ and a symmetric real matrix $X\in\R^{V\times V}$ we say that $X$ is a \emph{$\cl{G}$-matrix} if $X_{uv}=0$ whenever $uv\in E(G)$.

We will use several facts. The first one is taken from \citet[Lem.~3]{Pendavingh_separation} (the statement is strengthened, however, the proof is the same.)
\begin{lemma}\label{l:components}
Let $G$ be a connected graph and $M\in\mc{M}(G)$. For every $x\in\ker(M)$, if $\supp(x)$ is disconnected, then
\begin{enumerate}
	\item\label{l:components_signs} each of its connected component is either a connected component of $\supp_+(x)$, or of $\supp_-(x)$;
	\item\label{l:components_number} it has exactly $\dim(D)+1$ connected components, where \[D:=\set{y\in\ker(M)\,\middle|\,\supp(y)\subseteq\supp(x)}.\]
\end{enumerate}
\end{lemma}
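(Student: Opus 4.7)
The plan is to prove parts (1) and (2) simultaneously by a single quadratic-form calculation exploiting the interaction between the component decomposition of $\supp(x)$ and the Perron--Frobenius eigenvector of $M$. Let $C_1,\ldots,C_k$, $k\ge 2$, be the connected components of $G[\supp(x)]$, write $M_{C_i}$ for the principal submatrix of $M$ on rows and columns in $C_i$, and for each $i$ set $\tilde y_i\in\R^V$ to be $|x_v|$ on $v\in C_i$ and $0$ elsewhere. Two preparatory facts drive the argument. First, $\tilde y_i^T M\tilde y_i - x|_{C_i}^T M_{C_i}x|_{C_i}$ equals $\sum_{uv\in E(G[C_i])}M_{uv}(|x_u||x_v|-x_ux_v)$, which is $\le 0$ termwise; since $x|_{C_i}\in\ker M_{C_i}$ the subtracted term is zero, so $\tilde y_i^T M\tilde y_i\le 0$, with equality if and only if $x$ has constant sign on $C_i$. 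Second, no $G$-edge joins different components of $G[\supp(x)]$, so $\tilde y_i^T M\tilde y_j=0$ for $i\ne j$.

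Let $p>0$ be the Perron--Frobenius eigenvector of $M$ with eigenvalue $\lambda_1<0$, set $c_i:=\langle\tilde y_i,p\rangle/\|p\|^2>0$, and put $\tilde y_i':=\tilde y_i-c_ip\in p^\perp$. Because the negative eigenspace of $M$ is one-dimensional and spanned by $p$, the form $v\mapsto v^T Mv$ is positive semidefinite on $p^\perp$. Applying this to $\sum_i\alpha_i\tilde y_i'$ and evaluating the inner products with the two preparatory identities yields
\[
\sum_i \alpha_i^2\,\tilde y_i^T M\tilde y_i \;+\; |\lambda_1|\,\|p\|^2\Bigl(\sum_i\alpha_i c_i\Bigr)^{2}\;\ge\;0\qquad(\alpha\in\R^k).
\]
For any fixed $i_0$, I would choose $\alpha=c_j e_{i_0}-c_{i_0}e_j$ for some $j\ne i_0$ (possible since $k\ge 2$); then $\sum_i\alpha_i c_i=0$, the square vanishes, and $\sum_i\alpha_i^2\tilde y_i^T M\tilde y_i\ge 0$. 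Every summand being $\le 0$, each must equal $0$, and in particular $\tilde y_{i_0}^T M\tilde y_{i_0}=0$. The equality case of the first preparatory fact then gives part~(1).

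Part~(1) makes $\tilde y_i$ a strictly positive element of $\ker M_{C_i}$. Perron--Frobenius applied to $M_{C_i}$ rules out a negative eigenvalue (its positive eigenvector would be orthogonal to $\tilde y_i>0$), so $M_{C_i}$ is positive semidefinite; a standard argument (take $y\in\ker M_{C_i}$, subtract a suitable multiple of $\tilde y_i$ to create a zero entry, then use the sign pattern of $M_{C_i}$ to propagate that zero along the edges of the connected graph $G[C_i]$) shows $\dim\ker M_{C_i}=1$. Consequently every $y\in D$ has the form $y=\sum_i\alpha_i x_i$ with $x_i:=x|_{C_i}$ extended by zero. For such $y$, $(My)|_{\supp(x)}=0$ is automatic, and combined with $\supp(y)\subseteq\supp(x)$ this forces $y^T My=0$; decomposing $y=\gamma p+w$ with $w\perp p$ and using $w^T Mw\ge 0$ forces $\gamma=0$ and $Mw=0$. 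Thus $y\in\ker M$ reduces to the single linear condition $\langle y,p\rangle=0$, which is nontrivial because all $c_i>0$, yielding $\dim D=k-1$ and hence part~(2).

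The main technical point is arriving at the displayed inequality in the right form: the cross terms after the $p$-projection telescope into a single square only because of the vanishing $\tilde y_i^T M\tilde y_j=0$ from the first paragraph, and the special choice of $\alpha$ then surgically forces each $\tilde y_{i_0}^T M\tilde y_{i_0}$ to zero without any case analysis over components with possibly mixed signs.
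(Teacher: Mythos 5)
The paper itself gives no proof of this lemma---it is imported from Lemma~3 of the cited Pendavingh reference with the remark that the strengthened statement has the same proof---so your argument is an independent, self-contained derivation, and in substance it works. The quadratic-form mechanism is sound: with $\tilde y_i$ the entrywise absolute value of $x$ on $C_i$ one has $x|_{C_i}\in\ker M_{C_i}$, hence $\tilde y_i^TM\tilde y_i\le 0$ with equality iff $x$ has constant sign on the connected set $C_i$; the cross terms $\tilde y_i^TM\tilde y_j$ vanish for $i\ne j$ because no edge joins distinct components; and projecting off the Perron--Frobenius direction produces exactly your displayed inequality, after which the choice $\alpha=c_je_{i_0}-c_{i_0}e_j$ forces $\tilde y_{i_0}^TM\tilde y_{i_0}=0$ and gives part~(1). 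For part~(2), $\dim\ker M_{C_i}=1$ is correct and in fact already follows from the simplicity of the smallest (Perron--Frobenius) eigenvalue of $M_{C_i}$, which your orthogonality argument shows to equal $0$; if you keep the zero-propagation sketch instead, note that the propagation needs the vector to be nonnegative after subtraction, so the ``suitable multiple'' must be the minimal ratio against $\tilde y_i|_{C_i}$. The inclusion $D\subseteq\mathrm{span}(x_1,\dots,x_k)$ then follows as you say.

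The one sentence that is wrong as written is ``using $w^TMw\ge 0$ forces $\gamma=0$ and $Mw=0$.'' For a general $y=\sum_i\alpha_ix_i$ the identity $y^TMy=0$ only gives $\gamma^2\abs{\lambda_1}\|p\|^2=w^TMw$, which does not force $\gamma=0$; indeed, when $\sum_i\alpha_ic_i\neq 0$ such a $y$ still satisfies $y^TMy=0$ but cannot lie in $\ker M$ (otherwise $\dim D=k$, contradicting the very statement being proved). The correct assembly, which your closing sentence indicates you intend, is: if $y\in\ker M$ then $\langle y,p\rangle=0$ trivially; conversely, if $\langle y,p\rangle=0$ then $\gamma=0$, so $y\in p^{\perp}$, and $y^TMy=0$ together with positive semidefiniteness of $M$ on $p^{\perp}$ yields $y^TMv=0$ for all $v\in p^{\perp}$ (Cauchy--Schwarz for the semidefinite form), while $y^TMp=\lambda_1\langle y,p\rangle=0$, hence $My=\mb{o}$. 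Thus $D$ is exactly the kernel of the linear functional $y\mapsto\langle y,p\rangle$ on the $k$-dimensional span of the $x_i$, and this functional is nonzero there since $\langle x_i,p\rangle=\pm c_i\|p\|^2\neq 0$, so $\dim D=k-1$ and part~(2) follows. With that sentence repaired, your proof is complete.
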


The second fact is a consequence of the spectral decomposition of symmetric real matrices.
\begin{lemma}\label{l:SAP_sum_rank1}
	Let $G$ be a graph and $M\in\mc{M}(G)$. Let $X$ be a $\cl{G}$-matrix with zero diagonal such that $MX=\mb{0}$. Then there are $x_1, \ldots, x_k\in\ker(M)$ such that $X=\sum_{i=1}^k \sigma_i\cdot x_i x_i^T$, where $\sigma_i\in\set{-1, 1}$ for every $i\in[k]$ and $k=\corank(M)$. If $X\neq \mb{0}$, not all $\sigma_i$'s can have the same sign.
\end{lemma}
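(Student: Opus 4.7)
The plan is a direct application of the spectral theorem to $X$. Since $MX=\mb{0}$, every column of $X$ lies in $\ker(M)$, so $\mathrm{range}(X)\subseteq\ker(M)$. Using the symmetry of $X$, I would write its spectral decomposition as $X=\sum_{i=1}^{r}\lambda_i w_i w_i^T$, where $\lambda_1,\ldots,\lambda_r$ are the nonzero eigenvalues of $X$ and $w_1,\ldots,w_r$ are corresponding orthonormal eigenvectors. Each $w_i$ lies in $\mathrm{range}(X)\subseteq\ker(M)$, so $r\leq\dim\ker(M)=\corank(M)=k$.

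Next, I would set $\sigma_i:=\sign(\lambda_i)\in\set{-1,1}$ and $x_i:=\sqrt{|\lambda_i|}\, w_i\in\ker(M)$ for $i\leq r$, and pad the list with $x_{r+1}=\cdots=x_k:=\mb{o}$ (with $\sigma_{r+1},\ldots,\sigma_k$ chosen arbitrarily in $\set{-1,1}$) when $r<k$. This yields
\[
X=\sum_{i=1}^{r}\lambda_i w_i w_i^T=\sum_{i=1}^{k}\sigma_i\, x_i x_i^T,
\]
which is the required representation.

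For the final assertion, suppose $X\neq\mb{0}$ and, towards a contradiction, that all $\sigma_i$ share a common sign; by negating $X$ if necessary, I may assume $\sigma_i=+1$ for every $i$. Then $X=\sum_{i=1}^{k}x_i x_i^T$ is positive semidefinite, and its diagonal entries are $X_{uu}=\sum_{i=1}^{k}(x_i)_u^2$. The zero-diagonal hypothesis then forces $(x_i)_u=0$ for all $i$ and every $u\in V(G)$, hence each $x_i=\mb{o}$ and $X=\mb{0}$, contradicting $X\neq\mb{0}$.

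The argument is essentially bookkeeping around the spectral theorem, so I do not anticipate any genuine obstacle. The only small subtlety is that the spectral decomposition naturally produces exactly $\rank(X)$ rank-one summands, whereas the lemma insists on precisely $k=\corank(M)$ of them; this is handled by the padding step. Note also that the $\cl{G}$-matrix property of $X$ is not used in the proof -- only the symmetry of $X$, its vanishing diagonal, and the identity $MX=\mb{0}$ enter the argument.
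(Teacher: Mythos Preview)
Your proof is correct and follows essentially the same route as the paper: spectral decomposition of $X$, padding with zero vectors to reach $k=\corank(M)$ terms, and the diagonal argument for the sign claim. Your justification that each $x_i\in\ker(M)$ (via $\mathrm{range}(X)\subseteq\ker(M)$ and $w_i\in\mathrm{range}(X)$) is in fact slightly more direct than the paper's, which instead shows that each $x_i$ is orthogonal to every eigenvector of $M$ with nonzero eigenvalue.
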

\begin{proof}
By the spectral decomposition theorem, there are mutually orthogonal vectors $x_1, \ldots, x_k\in\R^{V(G)}$ such that $k=\rank(X)$ and $X=\sum_{i=1}^k \sigma_i\cdot x_i x_i^T$, where $\sigma_i\in\set{-1, 1}$ for every $i\in[k]$. Since $MX=\mb{0}$, the rank-nullity theorem implies that $k\leq\corank(M)$. We can assume that $k=\corank(M)$ by adding $\mb{o}$ an appropriate number of times to the collection $x_1, \ldots, x_k$ if needed.

By the spectral decomposition theorem again, there is an orthogonal basis of eigenvectors of $M$. Those belonging to the nonzero eigenvalues of $M$ form a basis of the row space of $M$; let $z$ be such an eigenvector. Then $MX=\mb{0}$ implies that $z$ is orthogonal to every column of $X$. Since the columns of $X$ are generated by the orthogonal set of vectors $x_1, \ldots, x_k$, we conclude that $z^T x_i=0$ for every $i\in[k]$. Thus, each $x_i$ is orthogonal to the whole row space of $M$, and hence, $x_i\in\ker(M)$ for every $i\in[k]$.

For every $v\in V(G)$, as $0=X_{vv}=\sum_{i=1}^k\sigma_i(x_{i})_v^2$ by the assumption, if all $\sigma_i$ have the same sign, then $(x_i)_v=0$ for all $i\in [k]$; thus, $x_i=\mb{o}$ for every $i\in[k]$, implying that $X=\mb{0}$.
\end{proof}

Now we are ready to prove Proposition~\ref{prop:corank_2_SAP}.
\begin{proof}[Proof of Proposition~\ref{prop:corank_2_SAP}]
For contradiction, assume $MX=0$ for some non-zero $\cl{G}$-matrix with zero diagonal. By Lemma~\ref{l:SAP_sum_rank1}, we know that there are $x,y\in\ker(M)$ such that $X=xx^T-yy^T$. As $X$ has zero diagonal, we see that $x_{v}^2=y_{v}^2$ for every $v\in V(G)$. Thus, $\supp(x)=\supp(y)$. If $x=\pm y$, then $xx^T=yy^T$ and $X=\mb{0}$, which is a contradiction.

Therefore, we can assume that there are $v,w\in V(G)$ such that $x_v=y_v$ and $x_w=-y_w$. As $X$ is $\overline{G}$-matrix, it holds that $x_vx_w=y_vy_w$ whenever $vw\in E(G)$. Therefore, $G[\supp(x)]$ cannot be connected.

Let $C_1, \ldots, C_k$ stand for the sets of vertices of the connected components of $G[\supp(x)]$. By Lemma~\ref{l:components}\eqref{l:components_signs}, both $x$ and $y$ have a constant sign on each $C_i$, $i\in[k]$. We can think of $M$ as a block matrix, where the blocks are indexed by $C_1, \ldots, C_k$ and $V\setminus\supp(x)$. As $M\in\mc{M}(G)$, $M_{C_i\times C_j}\equiv\mb{0}$ for every $i\neq j, i,j\in[k]$. Consequently, for every $i\in[k]$, the fact that $x,y\in\ker(M)$ implies that $M_{C_i} x_{C_i}=M_{C_i} y_{C_i}=\mb{o}$. By the Perron--Frobenius theorem, the smallest eigenvalue of $M_{C_i}$ is thus zero, has multiplicity one, and additionally, $x_{C_i}>\mb{o}$ or $x_{C_i}<\mb{o}$ and the same is true for $y$. Thus, $x_{C_i}=\alpha_{C_i} y_{C_i}$, where $\alpha_{C_i}\in\set{-1, 1}$ for every $i\in[k]$. As $x\neq\pm y$, we see that not all $\alpha_{C_i}$ have the same sign.

Let $\pi$ be the Perron--Frobenius eigenvector of $M$. Then
\begin{equation}\label{eq:signs}
\pi^T x=\sum_{i=1}^k\pi_{C_i}^T x_{C_i}=\sum_{i=1}^k\pi_{C_i}^T y_{C_i}=\pi^T y=0.
\end{equation}
Each of the terms $\pi_{C_i}^T x_{C_i}$ is non-zero, but their sum is zero. This means that they cannot all have the same sign. The same argument applies also to $y$. For $k=2$ we thus get that $x=\pm y$, which is a contradiction.

If $k=3$, then precisely two of $\alpha_{C_i}, i\in[3]$, have the same sign, say, $\alpha_{C_1}$ and $\alpha_{C_2}$. Using that $x_{C_i}=\alpha_{C_i}y_{C_i}$ for every $i\in[k]$, \eqref{eq:signs} then shows that also $\alpha_{C_3}$ must have the same sign as $\alpha_{C_1}=\alpha_{C_2}$, as each of the terms $\pi_{C_i}^T x_{C_i}$ is nonzero and they sum to zero.

Consequently, $k\geq 4$. However, Lemma~\ref{l:components}\eqref{l:components_number} then identifies a subspace of $\ker(M)$ of dimension at least three, which is the final contradiction.
\end{proof}

We would like to discuss some limits on possible extensions to Proposition~\ref{prop:corank_2_SAP}. These are based on the complete bipartite graphs $K_{a,b}$; it is well-known and easy to check that these graphs have $-\sqrt{ab}$ and $\sqrt{ab}$ as eigenvalues (of their adjacency matrix) of multiplicity 1 and all the remaining eigenvalues are $0$. As an auxiliary tool, we show that the (negative of) adjacency matrices of these graphs can be perturbed to obtain discrete Schrödinger operators with a prescribed corank.
\begin{obs}\label{obs:complete_bipartite}
Let $G$ be a complete bipartite graph with parts $A,B$ of sizes $a, b$ such that $1\leq a, b$. Assume that $S\subseteq V(G)$ is such that $A\cap S\neq A$ and $B\cap S\neq B$. Then for all $\varepsilon>0$ small enough the matrix $M:=\varepsilon I_S-A_G$ belongs to $\mc{M}(G)$. Moreover, $\corank(M)=a+b-2-\abs{S}$ and
	\begin{equation}\label{eq:complete_bipartite_ker}
		\ker(M)=\set{x\in\R^{V(G)}\colon x^T\mbbm{1}_{A\setminus S}=0=x^T\mbbm{1}_{B\setminus S},\ \supp(x)\cap S=\emptyset}.
	\end{equation}
\end{obs}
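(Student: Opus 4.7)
The plan has three parts: verify the off-diagonal structure of $M$, show that $M$ has exactly one negative eigenvalue for small $\varepsilon>0$, and compute $\ker(M)$. The first part is immediate: since $G$ is complete bipartite with parts $A,B$, the matrix $A_G$ has entries $1$ exactly on edges, while $\varepsilon I_S$ is diagonal, so $M=\varepsilon I_S - A_G$ satisfies $M_{uv}=-1$ on edges and $M_{uv}=0$ on non-edges for $u\neq v$, verifying condition (i) in the definition of $\mc{M}(G)$.

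For the signature, I would exploit the quadratic form identity
\[
    x^T M x \;=\; \varepsilon \sum_{v \in S} x_v^2 \;-\; 2\, (x^T \mbbm{1}_A)(x^T \mbbm{1}_B),
\]
which follows from $x^T A_G x = 2\sum_{u\in A,\, v\in B} x_u x_v$ for the complete bipartite graph. A one-dimensional negative subspace is exhibited by $x = \mbbm{1}$, for which $x^T M x = \varepsilon|S| - 2ab < 0$ for $\varepsilon$ sufficiently small. To rule out any two-dimensional negative subspace $W$, I consider the linear map $\phi \colon W \to \R^2$, $\phi(x) = (x^T \mbbm{1}_A, x^T \mbbm{1}_B)$. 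Either $\rank(\phi) \leq 1$, in which case some nonzero $x \in W$ satisfies $\phi(x) = \mb{o}$, and then $x^T M x = \varepsilon \sum_{v \in S} x_v^2 \geq 0$; or $\phi$ is a linear isomorphism onto $\R^2$, in which case some $x \in W$ maps into the open quadrant $\set{\alpha > 0,\ \beta < 0}$, giving $(x^T \mbbm{1}_A)(x^T \mbbm{1}_B) < 0$ and hence $x^T M x > 0$. Both alternatives contradict that $W$ is a negative subspace, so $M$ has exactly one negative eigenvalue, establishing condition (ii).

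For the kernel, I would read off $Mx = \mb{o}$ row by row using the bipartite block structure. Rows $v \in A \setminus S$ (nonempty by hypothesis) yield $x^T \mbbm{1}_B = 0$; rows $v \in B \setminus S$ yield $x^T \mbbm{1}_A = 0$; rows $v \in S$ then become $\varepsilon x_v = 0$, forcing $\supp(x) \cap S = \emptyset$. The reverse direction is immediate, proving \eqref{eq:complete_bipartite_ker}. For the dimension, the vectors with $\supp(x) \subseteq V(G) \setminus S$ form a space of dimension $a+b-|S|$, and the two conditions $x^T \mbbm{1}_{A \setminus S} = x^T \mbbm{1}_{B \setminus S} = 0$ are linearly independent because $A \setminus S$ and $B \setminus S$ are disjoint and both nonempty, so $\corank(M) = a+b-|S|-2$.

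The main subtlety I anticipate is the two-dimensional negative subspace argument; everything else is bookkeeping. The hypothesis that both $A \setminus S$ and $B \setminus S$ are nonempty enters essentially in the kernel computation, where it is needed to generate the two independent equations $x^T \mbbm{1}_A = x^T \mbbm{1}_B = 0$ that then, together with the rows indexed by $S$, collapse the whole description to \eqref{eq:complete_bipartite_ker}.
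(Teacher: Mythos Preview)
Your proof is correct and takes a genuinely different, more direct route than the paper.

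For the signature, the paper argues by eigenvalue continuity (small perturbations preserve the sign of the nonzero eigenvalues $\pm\sqrt{ab}$) combined with the Weyl-type inequality $\lambda_2(M)\geq\lambda_2(-A_G)=0$ coming from positive semidefiniteness of $\varepsilon I_S$. Your quadratic-form argument via the map $\phi(x)=(x^T\mbbm{1}_A,\,x^T\mbbm{1}_B)$ is cleaner and even yields an explicit threshold: the ``no two-dimensional negative subspace'' half works for every $\varepsilon>0$, and the witness $x=\mbbm{1}$ gives a negative direction as soon as $\varepsilon\abs{S}<2ab$. For the kernel, the paper first checks the easy inclusion, then accounts for the remaining spectrum by identifying an $\varepsilon$-eigenspace of dimension roughly $\abs{S}-2$ and explicitly solving for the one or two leftover eigenvalues on the four-dimensional space spanned by $\mbbm{1}_{A\setminus S},\mbbm{1}_{A\cap S},\mbbm{1}_{B\setminus S},\mbbm{1}_{B\cap S}$, showing each is nonzero. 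Your row-by-row reading of $Mx=\mb{o}$---first the rows in $A\setminus S$ and $B\setminus S$ to force $x^T\mbbm{1}_B=x^T\mbbm{1}_A=0$, then the rows in $S$ to force $x_v=0$ there---bypasses all of that and gives both inclusions at once. The paper's approach has the minor side benefit of describing more of the spectrum of $M$, but for the statement as written your argument is shorter and just as rigorous.
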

\begin{proof}
	Given a matrix $T\in\R^{n\times n}$, we write $\lambda_k(T)$ for its $k$-smallest eigenvalue (counting multiplicities). From the min-max characterization of eigenvalues it directly follows that for every $k\in[n]$
	\begin{equation}\label{eq:Weyl}
		\lambda_k(T_1+T_2)\geq\lambda_k(T_2)
	\end{equation}
	whenever $T_1, T_2\in\R^{n\times n}$ are symmetric and $T_1$ is positive semi-definite.
	
	Besides the spectrum of $G$ presented above, one can directly see that $\ker(A_G)=\set{x\in\R^{V(G)}\colon x^T\mbbm{1}_A=0=x^T\mbbm{1}_B}$.
	If $S=\emptyset$, there is nothing left to prove. Thus, from now on we assume that $S\neq\emptyset$. Moreover, due to a complete symmetry between $A$ and $B$, we can and will assume that $B\cap S\neq\emptyset$.
	
	The eigenvalues of any matrix depend continuously on the entries of the matrix. Thus, every small enough perturbation of the entries of $-A_G$ preserves the sign of its nonzero eigenvalues. Since $M$ is a small enough perturbation of $-A_G$ by assumption, it still has at least one negative eigenvalue and at least one positive eigenvalue (corresponding to the eigenvalues $\pm\sqrt{ab}$ of $-A_G$). Additionally, as $\varepsilon I_S$ is positive semi-definite, the inequality \eqref{eq:Weyl} for $\lambda_2(M)$ says that
	\[
	\lambda_2(M)\geq\lambda_2(-A_G)=0.
	\]
	Thus, $M$ has exactly one negative eigenvalue, which implies that $M\in\mc{M}(G)$.

	Moreover, we immediately see that every $x\in\R^{V(G)}$ such that $x^T\mbbm{1}_{A\setminus S}=0=x^T\mbbm{1}_{B\setminus S}$ and $\supp(x)\cap S=\emptyset$ satisfies $Mx=\mb{o}$; these vectors span a subspace of dimension exactly $a+b-2-\abs{S}$, since $A,B\nsubseteq S$ by assumption. Therefore, $\corank(M)\geq a+b-2-\abs{S}$ and $\ker(M)$ contains the space on the right-hand side of \eqref{eq:complete_bipartite_ker}.
	
	Next, if $\supp(x)\subseteq S$ and $x^T\mbbm{1}_{A}=0=x^T\mbbm{1}_{B}$, then $Mx=\varepsilon x$. Consequently, the eigenvalue $\varepsilon>0$ has multiplicity at least $\max\set{0, \abs{A\cap S}-1}+\abs{B\cap S}-1\geq\abs{S}-2$.
	
	The above discussion establishes the sign of all but one or two eigenvalues of $M$ (depending on whether $A\cap S=\emptyset$ or not). By the spectral theorem, we can choose an eigenvector $z\neq\mb{o}$ corresponding to (one of) the unknown eigenvalue(s), denoted by $\mu$, so that it is orthogonal to all the eigenvectors of $M$ described above. In other words, we can assume that $z$ is constant on each of the sets $A\setminus S, A\cap S, B\cap S$, and $B\setminus S$. This implies that there are $\alpha, \beta, \sigma_A, \sigma_B\in \R$ such that 
	\[
		z=\alpha\mbbm{1}_{A\setminus S}+\beta\mbbm{1}_{B\setminus S}+\sigma_A\mbbm{1}_{A\cap S}+\sigma_B\mbbm{1}_{B\cap S}.
	\]
	Our task is to show that $\mu\neq 0$, which will then imply that $\corank(M)=a+b-2-\abs{S}$ and that $\ker(M)$ does not contain any vectors besides those asserted by \eqref{eq:complete_bipartite_ker}.
	
	From the equation $Mz=\mu z$ we deduce that
	\begin{align}
	\mu\alpha &= -\beta\abs{B\setminus S}-\sigma_B\abs{B\cap S} \qquad  &\mu\sigma_A=\sigma_A\varepsilon-\beta\abs{B\setminus S}-\sigma_B\abs{B\cap S}\label{eq:mu_A}, \\
	\mu\beta&=-\alpha\abs{A\setminus S}-\sigma_A\abs{A\cap S}  \qquad  &\mu\sigma_B=\sigma_B\varepsilon-\alpha\abs{A\setminus S}-\sigma_A\abs{A\cap S},\label{eq:mu_B}
	\end{align}
	with the provision that if $A\cap S=\emptyset$, then we discard the second equation in \eqref{eq:mu_A}. We recall that $\abs{A\setminus S}, \abs{B\setminus S}, \abs{B\cap S}>0$ by assumption.
	Subtracting the first equation from the second in each of \eqref{eq:mu_A} and \eqref{eq:mu_B}, we get that 
	\begin{equation}\label{eq:mu_resol}
	\mu(\sigma_A-\alpha)=\sigma_A\varepsilon \qquad\text{and}\qquad \mu(\sigma_B-\beta)=\sigma_B\varepsilon.
	\end{equation}
	
	If $\beta=\sigma_B$, then $\sigma_B=0=\beta$. In the case that $A\cap S\neq\emptyset$, we also get $\mu\sigma_A=\sigma_A\varepsilon$ from \eqref{eq:mu_A}. In any case, if $\mu=0$, we get that $z=\alpha\mbbm{1}_{A\setminus S}$, but then $Mz=-\alpha\abs{A\setminus S}\mbbm{1}_B$. Thus, either $z=\mb{o}$, or $z$ is not an eigenvector, which is a contradiction. Consequently, $\mu\neq 0$ in this case.
	
	Next, we assume $\beta-\sigma_B\neq 0$ and we deduce from \eqref{eq:mu_resol} that $\mu=\frac{\sigma_B}{\sigma_B-\beta}\varepsilon$. Hence, we see that $\mu=0$ if and only if $\sigma_B=0$. By the second equation in \eqref{eq:mu_B}, $\sigma_B=0$ implies $\alpha\abs{A\setminus S}=-\sigma_A\abs{A\cap S}$. Now if $A\cap S=\emptyset$, we actually get that $\alpha=0$, which in turn implies $\beta=0$ via the first part of \eqref{eq:mu_A}; this is a contradiction to $z\neq \mb{o}$.
	
	So we are left with the case that $\beta-\sigma_B\neq 0$ and that $A\cap S\neq\emptyset$; due to the latter, we can follow the exact same reasoning as for $\beta$ and $\sigma_B$ above and assume that $\alpha-\sigma_A\neq 0$ as well (as the other case leads to $\mu\neq 0$). Therefore, $\mu=\frac{\sigma_A}{\sigma_A-\alpha}\varepsilon$ by \eqref{eq:mu_resol}. Hence, $\mu=0$ if and only if $\sigma_A=0$. Consequently, if $\mu=0$ in the present case, we also see that $\sigma_B=\sigma_A=0$. However, this implies $\alpha=0=\beta$ by the second equations in \eqref{eq:mu_A} and \eqref{eq:mu_B}, contradicting that $z\neq\mb{o}$.
	
	In any case, we obtained that $\mu\neq 0$, which finishes the proof.
\end{proof} 

We return to the discussion of the limits on possible extensions of Proposition~\ref{prop:corank_2_SAP}. First, for $K_{1, t}$ with $t\geq 4$ and $\abs{S}=t-4$ we obtain from Observation~\ref{obs:complete_bipartite} a discrete Schrödinger operator $M_t$ of corank 3. On the other hand, $K_{1, t}$ is an outer-planar graph, so $\mu(K_{1, t})\leq 2$ by \citep[Thm.~5.7]{CdV_orig_en}, and thus, $M_t$ does not satisfy (SAP) for any $t\geq 4$. This shows that Proposition~\ref{prop:corank_2_SAP} does not generalize to corank 3 matrices without tightening the assumptions.

Second, as was mentioned in the introduction, we observe that for every $k\in\N, k\geq 3$, there are discrete Schrödinger operators on $k$-connected graphs that have corank as low as $4$, yet they do not have (SAP).

\begin{obs}\label{obs:quest_on_SAP_tight}
	Let $a,b\in\N, a\geq 3, b\geq 4$, and $a\leq b$. Then there is $M\in\mc{M}(K_{a,b})$ with $\corank(M)=4$ that does not satisfy (SAP).
\end{obs}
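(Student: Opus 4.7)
The plan is to apply Observation~\ref{obs:complete_bipartite} to produce a concrete $M \in \mc{M}(K_{a,b})$ of corank $4$ and then write down a non-zero symmetric $\cl{K_{a,b}}$-matrix $X$ with zero diagonal satisfying $MX = \mb{0}$, which by definition shows $M$ fails (SAP).

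Since $a \geq 3$ and $b \geq 4$, we may choose $S \subseteq V(K_{a,b})$ with $A' := A \setminus S$ of size $2$ and $B' := B \setminus S$ of size $4$. For all $\varepsilon > 0$ sufficiently small, Observation~\ref{obs:complete_bipartite} then yields $M := \varepsilon I_S - A_{K_{a,b}} \in \mc{M}(K_{a,b})$ of corank $|A'| + |B'| - 2 = 4$, with kernel equal to the vectors supported on $A' \cup B'$ whose coordinates sum to zero separately over $A'$ and over $B'$. For $X$, let it vanish outside the $B' \times B'$ block and coincide on that block with any non-zero symmetric $4 \times 4$ matrix $Y$ that has zero diagonal and zero row sums; an explicit choice is
\[
Y=\begin{pmatrix}0&1&1&-2\\1&0&-2&1\\1&-2&0&1\\-2&1&1&0\end{pmatrix}.
\]

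Such a $Y$ exists because symmetric $4 \times 4$ matrices with zero diagonal form a $6$-dimensional space and the $4$ linear constraints from the row sums are readily checked to be independent, leaving a $2$-dimensional space of admissible $Y$'s. The verification that $X$ is a valid witness is then routine: $X$ is symmetric with zero diagonal; its support lies in $B \times B$ while every edge of $K_{a,b}$ crosses between $A$ and $B$, so $X$ is a $\cl{K_{a,b}}$-matrix; and each column of $X$ has support inside $B' \subseteq V(K_{a,b}) \setminus S$ and sums to zero on $A'$ (vacuously) and on $B'$ (by the row-sum property of $Y$), hence lies in $\ker(M)$ by the description in Observation~\ref{obs:complete_bipartite}, giving $MX = \mb{0}$. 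The main (though very modest) step is to identify the right split of $|A'| + |B'| = 6$: the partition $\{3,3\}$ would leave no non-trivial $Y$ because the $3 \times 3$ analogue of the above dimension count gives $3 - 3 = 0$, while the partition $\{2,4\}$ works and is available precisely under the hypotheses $a \geq 3$, $b \geq 4$.
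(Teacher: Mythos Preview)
Your proof is correct and follows essentially the same route as the paper: both apply Observation~\ref{obs:complete_bipartite} with $|A\setminus S|=2$, $|B\setminus S|=4$ to obtain $M$ of corank $4$, and then exhibit a witness $X$ supported on the $(B\setminus S)\times(B\setminus S)$ block. The only difference is cosmetic---the paper writes its witness as $X=xy^T+yx^T$ with $x=\mbbm{1}_{\{v_1\}}-\mbbm{1}_{\{v_2\}}$ and $y=\mbbm{1}_{\{v_3\}}-\mbbm{1}_{\{v_4\}}$, whereas you give a different explicit $4\times 4$ block backed by a dimension count.
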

\begin{proof}
	We write $A,B$ for the parts of $K_{a,b}$ of sizes $a,b$, respectively. We choose $S\subset A\cup B$ such that $\abs{A\cap S}=a-2$ and $\abs{B\cap S}=b-4$. Then Observation~\ref{obs:complete_bipartite} provides $M\in\mc{M}(K_{a,b})$ with $\corank(M)=a+b-2-(a+b-6)=4$.
	
	Let $v_1,\ldots, v_4\in B\setminus S$ be pairwise distinct. By \eqref{eq:complete_bipartite_ker}, both $x:=\mbbm{1}_{\set{v_1}}-\mbbm{1}_{\set{v_2}}$ and $y:=\mbbm{1}_{\set{v_3}}-\mbbm{1}_{\set{v_4}}$ belong to $\ker(M)$. We set $X:=xy^T+yx^T$. Then $MX=\mb{0}$ and $X$ also satisfies the other conditions in Definition~\ref{def:SAP} certifying that $M$ does not have (SAP).
\end{proof}

As the last remark, we note that Observation~\ref{obs:quest_on_SAP_tight} does not provide counterexamples to Question~\ref{q:SAP_and_connectivity}, since $\mu(K_{a,b})=a+1$ whenever $a\leq b$ and $b\geq 3$ \citep[Eq.~(2)]{CdV_main}, whereas $K_{a,b}$ is only $a$-connected. It shows that the assumption of $\mu(G)$-connectedness in Question~\ref{q:SAP_and_connectivity} cannot be relaxed to $(\mu(G)-1)$-connectedness for any value of $\mu(G)\geq 4$, but this was already noted by \citet{SS_flat_and_SAP}.
\section{Graphs on surfaces}\label{s:surfaces}
We begin with several preliminaries specific to the present section.

\paragraph{Surfaces, graph embeddings.}
By a \emph{surface} we always mean a connected closed surface.
A \emph{face} of an embedding $\phi\colon G\to S$ of a graph $G$ into a surface $S$ is a connected component of $S\setminus\phi(G)$. A \emph{map} of a connected graph $G$ on a surface $S$ is an embedding of $G$ into $S$ such that every open face is homeomorphic to an open disk; it is a \emph{triangulation} of $S$ if every face is additionally bounded by exactly three edges. Writing $v, e, f$ for the number of vertices, edges and faces, respectively, of a map $\phi\colon G\to S$, the classical Euler's formula asserts that
\begin{equation}\label{eq:Euler}
	v-e+f=\chi(S),
\end{equation}
where $\chi(S)$ is an invariant of $S$ called \emph{Euler's characteristic}. 

When $S$ is \emph{orientable}, any map on $S$ can be described purely combinatorially, up to a homeomorphism of $S$, by its \emph{rotation system} (sometimes called a \emph{combinatorial map})---a cyclic permutation at each vertex describing the relative ordering of the incident edges. For more details as well as general background on surfaces and graph embeddings we refer to \citep{Mohar_Thomassen_GraphsOnSurface}.

According to the classification of surfaces (e.g., \citep{Mohar_Thomassen_GraphsOnSurface}), every \emph{orientable} surface is homeomorphic to the sphere with $n\in\N$ handles attached to it; we denote such surface by $T_n$. The number of handles $n$ is referred to as the (orientable) \emph{genus} of $T_n$. The \emph{genus of a map} on an orientable surface $S$ is the genus of $S$ and equals to $(2-\chi(S))/2$. By \eqref{eq:Euler}, the genus of a map is determined by the number of its vertices, edges and faces.

\paragraph{Automorphisms, rotary maps.}
A triple incidence consisting of a vertex, an edge and a face of a map (i.e., one of two sides of a directed edge of a map) is called a \emph{blade}. A permutation of the blades preserving all incidences between vertices, edges and faces is called an \emph{automorphism} of the map; these form a group under composition, which is called the \emph{automorphism group} of the map. Since the underlying graph of a map is connected, if we know the image of a single blade under an automorphism, we can uniquely reconstruct the whole automorphism.

A particular highly symmetric type of maps comprises of so-called \emph{rotary} (also called \emph{regular}) maps; these are maps for which there are two automorphisms $y, z$ such that the induced action of $y$ on the edges of the underlying graph cyclically permutes the successive edges of some face $\sigma$ and the induced action of $z$ cyclically permutes the successive edges incident to a vertex of $\sigma$. The connectivity of the graph then implies that the automorphism group acts transitively on the vertices, the edges and also the faces of the map (but not necessarily on its blades). In particular, every vertex has then the same degree $q$ and every face is incident to $p$ edges for some $p, q\in\N$. Euler's formula \eqref{eq:Euler} provides a linear relation between $p, q$ and the genus of such a map; the tuple $(p, q)$, called a \emph{type} of a rotary map, thus determines its genus. For more details, we refer to \citep{Conder--Dobcsanyi, Bergau--Garbe} and \citep[Ch.~8]{Coxeter_Moser}.

\subsection*{The counterexample to Conjecture~\ref{c:CdV_surfaces}}
Given the tight relationship between a rotary map and its automorphism group, starting with an appropriate presentation of the group, one can reconstruct the map. However, the particular way of doing it depends on further properties of the group; below we recall only the construction that applies to our case of interest. For the discussion of the other cases we refer to \citep{Conder--Dobcsanyi}.

The counterexample to Conjecture~\ref{c:CdV_surfaces} described here (and which was also used by \citep{CdV_surfaces_cont_counter-examples}) comes from a particular rotary map of genus $10$. \citet{Conder--Dobcsanyi} enumerated all rotary orientable maps of genus up to $15$. Later on, \citet{Conder_list} enumerated all orientable rotary maps of genus up to $101$. The group we are interested in is called \verb|C10.1| in \citep{Conder--Dobcsanyi} (and \verb|T10.1| in \citep{Conder_list} in a slightly different, but equivalent form) and is presented as\footnote{The presentation in \citep{Conder--Dobcsanyi} contains apparently a typo, as it is missing the relation $(yz)^2=1$. However, this relation holds true in every automorphism group of any rotary map. The corresponding relation is also present in the presentation of the same group in \citep{Conder_list}.}
\begin{equation}\label{eq:Gamma}
\Gamma_{10}:=\langle y, z\,|\,1=y^3=z^8=(yz)^2=z^2y^{-1}z^3y^{-1}zy^{-1}z^{-3}yz^{-3}y^{-1}\rangle.	
\end{equation}

	Following \citep{Conder--Dobcsanyi}, the map it corresponds to\footnote{In reference to \citep{Conder--Dobcsanyi}, it is important to note that the map underlying the group $\Gamma_{10}$ is of so-called \emph{chiral} type. This influences the exact way it is reconstructed from its automorphism group.}, here denoted as $\phi_{10}$, has as vertices the right cosets of $\langle z\rangle$ in $\Gamma_{10}$, the edges are the right cosets of $\langle yz\rangle$ and the faces are the right cosets of $\langle y\rangle$. Incidences are determined by non-empty intersection. The graph underlying $\phi_{10}$ is denoted by $G_{10}$. To create $G_{10}$ in \verb|SageMath|, the reader can run the method \verb|construct_G10()| from the ancillary file \verb|code.sage|.
	
\begin{prop}\label{prop:counter_ex}
	The map $\phi_{10}$ is a triangulation of $T_{10}$ such that $\mu(G_{10})\geq 16$.
\end{prop}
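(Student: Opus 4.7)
The plan is to split the proposition into two parts: showing that $\phi_{10}$ is a triangulation of $T_{10}$, and exhibiting a CdV matrix for $G_{10}$ of corank at least $16$. Both parts reduce to rigorous (exact-arithmetic) \verb|SageMath| computations, consistent with the computer-assisted nature of the claim.

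For the triangulation claim, a Todd--Coxeter coset enumeration applied to the presentation \eqref{eq:Gamma} certifies that $|\Gamma_{10}|=432$ and that $y$, $z$ have orders $3$ and $8$, respectively; the relation $(yz)^2=1$ together with these orders forces $|yz|=2$. The construction recalled above then yields $v=[\Gamma_{10}:\langle z\rangle]=54$, $e=[\Gamma_{10}:\langle yz\rangle]=216$, and $f=[\Gamma_{10}:\langle y\rangle]=144$. Every face is a coset of a cyclic group of order $3$ and is hence bounded by three edges, so $\phi_{10}$ is a triangulation. Euler's formula \eqref{eq:Euler} then gives $v-e+f=-18=\chi(T_{10})$, as required.

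For the corank bound, the plan is to exploit the vertex-transitive action of $\Gamma_{10}$ on $V(G_{10})$ and consider the one-parameter family $M_\lambda:=-A_{G_{10}}+\lambda I_{G_{10}}$, which automatically satisfies condition (i) in the definition of $\mc{M}(G_{10})$. Since $G_{10}$ is connected and $8$-regular, the top eigenvalue $\mu_1$ of $A_{G_{10}}$ equals $8$ and is simple by Perron--Frobenius. Setting $\lambda:=\mu_2$, the second-largest eigenvalue of $A_{G_{10}}$, produces a matrix $M_{\mu_2}$ with exactly one negative eigenvalue and with corank equal to the multiplicity of $\mu_2$. I would compute the characteristic polynomial of $A_{G_{10}}$ exactly over $\Z$, factor it, and verify that this multiplicity is at least $16$; the plausibility of such a large eigenspace comes from the representation theory of $\Gamma_{10}$, which highly constrains the spectrum of any $\Gamma_{10}$-equivariant matrix on the $54$ vertices.

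The main technical obstacle is then to check that $M:=M_{\mu_2}$ satisfies the Strong Arnold Property. By Lemma~\ref{l:SAP_sum_rank1}, the space of symmetric matrices $X$ with $MX=\mb{0}$ equals $\mathrm{Sym}^2(\ker(M))\subseteq\R^{V(G_{10})\times V(G_{10})}$, which has dimension $\binom{17}{2}=136$ assuming $\corank(M)=16$. SAP then amounts to the restriction map sending such an $X$ to its entries indexed by $E(G_{10})\cup\{(v,v):v\in V(G_{10})\}$ being injective, i.e., an explicit $270\times 136$ linear map being of full column rank. I would carry out this exact rank computation in \verb|SageMath|, working over the number field $\Q(\mu_2)$. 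This is the step whose success is not guaranteed a priori; if SAP were to fail for this particular $M_{\mu_2}$, one would fall back on perturbing within the space of $M\in\mc{M}(G_{10})$ sharing the same $16$-dimensional kernel, a space whose structure is highly constrained by $G_{10}$ but still rich enough to make SAP a generic condition.
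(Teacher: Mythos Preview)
Your plan is correct and follows the same overall route as the paper: the coset counting for the triangulation claim, the choice $M=\mu_2 I_{G_{10}}-A_{G_{10}}$ with $\mu_2=1+\sqrt{7}$ (which the paper verifies has multiplicity exactly $16$), and a rigorous rank computation over $\Q(\mu_2)$ to certify (SAP).

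The one genuine difference is in how the (SAP) check is set up. The paper works directly from Definition~\ref{def:SAP}: it treats the $\binom{54}{2}-216=1215$ off-diagonal non-edge entries of $X$ as unknowns and the $54^2-54=2862$ off-diagonal entries of $MX$ as linear equations, then checks that the resulting $2862\times 1215$ system has full column rank (about one hour of computation). Your reformulation first uses Lemma~\ref{l:SAP_sum_rank1} to parametrise the solutions of $MX=\mb{0}$ by $\mathrm{Sym}^2(\ker M)$, a $136$-dimensional space, and then tests injectivity of the restriction to the $216+54=270$ edge-and-diagonal positions, yielding a $270\times 136$ system. Both are equivalent certificates of (SAP); yours is considerably smaller and hence faster, at the cost of first computing an explicit basis of $\ker(M)$ over $\Q(\mu_2)$. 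The fallback perturbation step you sketch is not needed here, since the rank check succeeds for $M_{\mu_2}$ itself.
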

\begin{proof}[Proof using \texttt{SageMath}~\citep{sagemath}] All the code used in the present proof is contained in the ancillary file \verb|code.sage|. In order to reproduce all the steps of the proof, the reader can run the method \verb|genus10()|.
	
	By \citep{Conder--Dobcsanyi} or using \verb|SageMath| (also a part of the method \verb|construct_G10()|), we see that $\abs{\Gamma_{10}}=432$ and the orders of $y, z$, and $yz$ are $3, 8$, and $2$, respectively. Since $\Gamma_{10}$ acts transitively on $\phi_{10}$, it has $432/8=54$ vertices (each of degree 8), $432/2=216$ edges, and $432/3=144$ faces, each of which is a triangle. Using Euler's formula \eqref{eq:Euler}, the genus of $\phi_{10}$ is $10$.
	
	Using \verb|SageMath| again (via the method \verb|factor_charpoly()|), the characteristic polynomial of $G_{10}$ is
	\[p_{G_{10}}(x)=(x - 8)(x + 4)^2 x^3 (x + 1)^8 (x + 3)^8 (x^2 - 2x - 6)^{16},\]
	which gives the spectrum $\set{-4^{(2)}, -3^{(8)}, 1-\sqrt{7}^{(16)}, -1^{(8)}, 0^{(3)}, 1+\sqrt{7}^{(16)}, 8^{(1)}}$.
	Thus, setting $\lambda:=1+\sqrt{7}$ and defining a matrix $M_{10}\in\mc{M}(G_{10})$ as
	\[
	M_{10}:=\lambda I_{G_{10}}-A_{G_{10}},
	\]
	we obtain a discrete Schrödinger operator with kernel of dimension $16$.
	
	It remains to check that $M_{10}$ satisfies (SAP). In general, for a graph $G$, Definition~\ref{def:SAP} yields a homogeneous system of $\abs{V(G)}^2$ linear equations with $\binom{\abs{V(G)}}{2}-\abs{E(G)}$ variables. We can omit $\abs{V(G)}$ equations by noting that the equations corresponding to the diagonal elements of the right-hand side in Definition~\ref{def:SAP} are always automatically satisfied:
	\[\sum_{v\in V(G)} M_{uv}X_{vu}=M_{uu}X_{uu}+\sum_{v\in V(G):uv\in E(G)}M_{uv}X_{vu}=0\quad\text{for every $u\in V(G)$}.\]
	
	For $G_{10}$ we get $54^2-54=2862$ equations and $1215$ variables. We write $T_{10}$ for the corresponding $2862\times 1215$ matrix describing this homogeneous linear system. By the above, $M_{10}$ has (SAP) if and only if $T_{10}$ has full rank. Since $\lambda$ is irrational and we want to perform the computation in \emph{exact} arithmetic, we use that the elements of $M_{10}$, and consequently, of $T_{10}$ lie in the algebraic number field $\Q[\lambda]$ by definition.
	
	\verb|SageMath| verified that $T_{10}$ has full rank in roughly 1 hour on a conventional laptop; the corresponding code is in the method \verb|testSAP()|. We used \verb|SageMath 9.8|~\cite{sagemath} with \verb|Python 3.10.12|. Internally, our version of \verb|SageMath| uses \verb|GAP 4.11.1| for group computations as well as the libraries \verb|linbox 1.6.3.p1| and \verb|Pari 2.15.2.p1| for algebraic computations.
\end{proof}

Every graph $G$ that embeds into a surface $S$ embeds also to any other surfaces $S'$ with $\chi(S')<\chi(S)$ \citep[Prop.~4.4.1]{Mohar_Thomassen_GraphsOnSurface}.
Thus, comparing the bound of Proposition~\ref{prop:counter_ex} with \eqref{eq:Heawood}, we conclude that $G_{10}$ is a counterexample to Conjecture~\ref{c:CdV_surfaces} also for all surfaces $S$ with $2-2\cdot 10=-18>\chi(S)\geq -28$ (including the non-orientable surfaces in the given range).
\begin{prop}\label{prop:counter_ex_extended}
	Conjecture~\ref{c:CdV_surfaces} fails for all surfaces $S$ with $\chi(S)\in [-28, -18)$.
\end{prop}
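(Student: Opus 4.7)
The plan is to combine Proposition~\ref{prop:counter_ex} with the monotonicity of embeddability in Euler characteristic, both of which are supplied right before the statement. By Proposition~\ref{prop:counter_ex}, the graph $G_{10}$ embeds into $T_{10}$ and $\mu(G_{10})\geq 16$. Since $\chi(T_{10})=2-2\cdot 10=-18$, the cited result \citep[Prop.~4.4.1]{Mohar_Thomassen_GraphsOnSurface} gives that $G_{10}$ embeds into every closed surface $S$, orientable or not, with $\chi(S)<-18$. In particular $G_{10}$ embeds into every surface with $\chi(S)\in[-28,-18)$, covering both the orientable surfaces $T_{10},T_{11},\ldots,T_{15}$ and the non-orientable surfaces $N_{19},N_{20},\ldots,N_{28}$ in that range.

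It then suffices to verify that the conjectured bound $\gamma(S)-1$ is strictly smaller than $\mu(G_{10})$ for every such $S$. No surface in our range is the Klein bottle (which has $\chi=0$), so Heawood's formula \eqref{eq:Heawood} applies without exception. The right-hand side of \eqref{eq:Heawood} is non-increasing in $\chi$, so its maximum over $\chi\in[-28,-18)$ is attained at $\chi=-28$, where a direct computation gives
\[
	\gamma(S)=\floor*{\frac{7+\sqrt{49+24\cdot 28}}{2}}=\floor*{\frac{7+\sqrt{721}}{2}}=16,
\]
so that $\gamma(S)-1=15$. Consequently $\mu(G_{10})\geq 16>15\geq \gamma(S)-1$ for every surface $S$ with $\chi(S)\in[-28,-18)$, refuting Conjecture~\ref{c:CdV_surfaces} for each of them.

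There is no genuine obstacle here: the proof is a three-line combination of Proposition~\ref{prop:counter_ex}, the Mohar--Thomassen embeddability lemma, and an evaluation of Heawood's formula at the endpoint $\chi=-28$. The only point worth stating explicitly is that the argument is insensitive to orientability, which is why the range in the statement genuinely contains both even and odd values of $\chi(S)$.
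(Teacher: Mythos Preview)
Your proof is correct and follows exactly the argument the paper gives in the paragraph immediately preceding the proposition: combine Proposition~\ref{prop:counter_ex} with the embeddability monotonicity from \citep[Prop.~4.4.1]{Mohar_Thomassen_GraphsOnSurface} and evaluate Heawood's formula at the extremal value $\chi=-28$. One cosmetic slip worth fixing: $T_{10}$ has $\chi(T_{10})=-18$ and therefore lies \emph{outside} the half-open interval $[-28,-18)$, and with the usual convention $\chi(N_k)=2-k$ the non-orientable surfaces in range are $N_{21},\ldots,N_{30}$, not $N_{19},\ldots,N_{28}$; neither affects the logic.
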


Moreover, by Theorem~\ref{thm:discrete_to_cont} applied to $G_{10}$ we obtain Schrödinger operators
providing counterexamples to the continuous Conjecture~\ref{c:CdV_surfaces_continuous} in the specified range for $\chi(S)$. As far as we know, this is a new result; in particular, it does not follow from the counterexamples found in \citep{CdV_surfaces_cont_counter-examples}, which inspired this note. On the other hand, counterexamples obtained via Theorem~\ref{thm:discrete_to_cont} are in general only Schrödinger operators, whereas \citep{CdV_surfaces_cont_counter-examples} obtained Riemannian Laplacians.
\section{Discussion and open problems}
\paragraph{Strong Arnold Property.}
It is the authors' view that the current lack of understanding of the relationship between (SAP) for $M\in\mc{M}(G)$ on one side and the combinatorics of $G$ and $\ker(M)$ on the other side is a major obstacle for further progress on various interesting conjectures regarding $\mu$.
From this perspective, we see Question~\ref{q:SAP_and_connectivity} as an important study case, which might generate a deeper insight into combinatorial ramifications of (SAP). A necessary condition for the validity of Question~\ref{q:SAP_and_connectivity}, which has also been mentioned in \citep{SS_flat_and_SAP}, is the following: Is it true that $\max_{M\in\mc{M}(G)}\corank(M)=\mu(G)$ for every $\mu(G)$-connected graph $G$?

The weaker version of Question~\ref{q:SAP_and_connectivity} has been studied earlier than Question~\ref{q:SAP_and_connectivity} and its validity in the cases $\mu(G)\leq 4$ was first established for the weaker version only \citep{vdHolst_planar, CdV_linkless}. Both versions are currently open for all values of $\mu(G)\geq 5$.

\paragraph{The parameter $\mu$ and graphs on surfaces.}
It is natural to ask whether one can prove Proposition~\ref{prop:counter_ex} without resorting to any kind of computer-aided verification. There are two principal steps in this direction. The first one is to establish that the matrix $M_{10}$ constructed in the proof of Proposition~\ref{prop:counter_ex} has exactly one negative eigenvalue and that its kernel has dimension at least $16$. This problem might be approachable through the means of group representation theory applied to the group $\Gamma_{10}$, perhaps along the lines followed by \citep{CdV_surfaces_cont_counter-examples}.

The second problem is to establish that $M_{10}$ has (SAP). Currently, there are basically no tools to do so. All the results in the literature that establish (SAP) for some matrix either use ad hoc arguments applicable only to a narrow class of matrices in question, or bypass the problem somehow, for instance, using the results presented in the introduction around Question~\ref{q:SAP_and_connectivity} and Proposition~\ref{prop:corank_2_SAP}. In this connection, we note that $M_{10}$ is very far from the scope of Question~\ref{q:SAP_and_connectivity}, as it is $8$-connected, but $\corank(G_{10})\geq 16$. We showed in Observation~\ref{obs:quest_on_SAP_tight} in Section~\ref{s:SAP} that one cannot hope to get (SAP) for free for discrete Schrödinger operators on graphs $G$ that are not $\mu(G)$-connected. Thus, even if one manages to establish that $\corank(M_{10})\geq 16$ by hand, we currently do not see any way to prove that $M_{10}$ has (SAP) without using a computer.

Since the answer to Conjecture~\ref{c:CdV_surfaces} is negative, we wonder what is the correct order of growth of $\max\set{\mu(G)\colon G\text{ embeds into } S}$ as $\chi(S)\to-\infty$. In particular, could Conjecture~\ref{c:CdV_surfaces} be still true up to lower order terms?  

\subsection*{Acknowledgements.}
We would like to express our sincere gratitude to the developers and maintainers of \verb|SageMath| and of the systems it depends on for the computation conducted in the present paper, such as \verb|Python|, \verb|GAP|, \verb|linbox| and \verb|Pari| and the packages on which these depend.

\printbibliography

\end{document}